\renewcommand{\leq}{\leqslant}
\renewcommand{\geq}{\geqslant}
\newcommand{\E}{\mathbb{E}}
\newcommand{\F}{\mathcal{F}}
\newcommand{\I}{\mathbb{I}}
\newcommand{\BMO}{\text{BMO}}
\def \II#1 {\mathbb{I}_#1}
\renewcommand{\P}{\mathbb{P}}
\newcommand{\R}{\mathbb{R}}
\newcommand{\X}{\mathfrak{X}}
\newcommand{\holders} {H$\ddot{\text{o}}$lder's }
\newtheorem{Le}{Lemma}[section]
\newtheorem{Th}{Theorem}[section]
\numberwithin{equation}{section}
\begin{document}

\title{Limit case of Hardy--Littlewood--Sobolev inequality for martingales \protect\footnote{Supported by the Russian Science Foundation grant N. 19-71-10023.}}
\author{Dmitry Yarcev}
\date{}

\maketitle

\begin{abstract}
We provide a version of the Stein--Weiss inequality for arbitrary martingales.
\end{abstract}

\section{Introduction}
    \par{The classical Hardy--Littlewood--Sobolev plays an important role in analysis. It was initially introduced in \cite{sobolev} as a tool to prove Sobolev's embedding theorem. The inequality says that the Riesz potential (i.e. a Fourier multiplier with the symbol $|\cdot|^\alpha$) is a continuous mapping from $L_p(\R^d)$ to $L_q(\R^d)$, where $\frac{\alpha}{d} = \frac{1}{p}-\frac{1}{q}$ and $1 < p < q < \infty$. The notion of Riesz potential was later transferred to probability theory. Watari provided such a counterpart in \cite{watari} and proved the corresponding version of the Hardy--Littlewood--Sobolev inequality. This idea was generalised further in \cite{naksad}, and the corresponding estimates were proved for regular martingales. Finally, in \cite{us} the authors propose a version of the Riesz potential and prove the corresponding inequality for arbitrary martingales. There are two versions of the Riesz potential in \cite{us}. The one that is studied here provides a more interesting version of the inequality; however, it is not a matringale transform and lacks the semigroup property.}
    \par{We are concerned with the limit case of the Hardy--Littlewood--Sobolev inequality for martingales when $q=\infty$. In this case the Riesz potenial does not necessarily map the martingale space $L_p$ to $L_\infty$; however, it was asked by Prof. Adam Osekowski whether it maps $L_p$ to BMO. This conjecture is proved here along with a few auxiliary inequalities, which might be of some interest in themselves. Notably, one of those inequalities can be seen as the martingale counterpart to the Stein--Weiss inequality (theorem H in \cite{steinweiss}).}
    \par{I am grateful to Prof. Adam Osekowski for asking the question underlying this work and to Prof. Dmitriy Stolyarov for supervising the development of this paper.}

\section{Notation and main result}
    \par{Consider a probability space $\X = (\Omega, \F, \P)$ and a sequence of sigma-algebrae $\F_0 \subset \F_1 \subset ... \subset \F$. We assume that this sequence is a filtration, i.e. for each two distinct points $\omega_1$ and $\omega_2$ in $\Omega$ there is a number $n$ and an event $X \in \F_n$ such that $\omega_1 \in X$ and $\omega_2 \notin X$. We also assume that $\F_0 = \{\varnothing, \Omega\}$ and each subalgebra $\F_n$ is generated by a finite or countable set of atoms. In other words, for each step $n$ there is a finite or countable partition $\Omega = \bigsqcup_{j} a_j$ such that the algebra $\F_n$ is generated by the events $a_j$.}
    \par{For a real parameter $\alpha \in [0, 1)$, define the Riesz potential $I_\alpha$ by the formula}
    \begin{equation} \label{Riesz} I_\alpha[F] = \sum \limits_{n=1}^\infty M_n(E_n-E_{n-1})F, ~F \in L_1(\X) \end{equation}
    \par{There are two families of operators in this formula. The operators $E_n$ perform averaging over algebrae $\F_n$: $E_nf = \E(f \mid \F_n)$. The operators $M_n$ multiply by special functions: $[M_nf](\omega) = b_n^\alpha(\omega)f(\omega)$, where $b_n(\omega) = \P(a_j)$ if the atoms $a_1, a_2, ... $ generate $\F_n$ and $\omega \in a_j$. Note that $(E_n-E_{n-1})F$ is the $n$-th martingale difference of $F$: $(E_n-E_{n-1})F = F_n - F_{n-1}$.}
    \par{Our goal is to prove the limit case of Hardy--Littlewood--Sobolev inequality for martingales. The non-limit case was proved in \cite{us}. We cite it here.}
    \begin{Th}
        For all numbers $p,q$ such that $1 < p < q < \infty,$ the operator $I_\alpha,~\alpha = \frac{1}{p} - \frac{1}{q},$ maps $L_p(\X)$ to $L_q(\X)$ continuously. In other words, there is a constant $C = C(p, q)$ such that for all functions $F \in L_p(\X)$ the series \eqref{Riesz} converges in $L_q$ and the inequality
    \begin{equation} \label{HLS} \|I_\alpha[F]\|_q \leq C \|F\|_p \end{equation}
    holds.
    \end{Th}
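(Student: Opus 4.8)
The plan is to reduce inequality~\eqref{HLS} to a pointwise estimate of Hedberg type for a positive majorant of $I_\alpha$, and then to apply Doob's maximal inequality. Put $b_0\equiv 1$, and observe that for every $\omega$ the sequence $n\mapsto b_n(\omega)=\P(a_n(\omega))$ is nonincreasing --- the atoms of $\F_{n+1}$ refine those of $\F_n$ --- and bounded by $1$. I would first prove~\eqref{HLS} for $F$ measurable with respect to some $\F_N$; the general case, together with the convergence of~\eqref{Riesz} in $L_q$, then follows by density and the uniform a priori bound. For such $F$ the sum in~\eqref{Riesz} is finite, and summation by parts gives
\begin{equation*}
I_\alpha[F]=-b_1^\alpha\,\E F+b_N^\alpha F_N+\sum_{n=1}^{N-1}\bigl(b_n^\alpha-b_{n+1}^\alpha\bigr)F_n .
\end{equation*}
Since $b_n^\alpha-b_{n+1}^\alpha\geq 0$ pointwise and $|F_n|=|E_nF|\leq E_n|F|$, with $G:=|F|$ we get
\begin{equation*}
|I_\alpha[F]|\leq |\E F|+b_N^\alpha E_NG+\sum_{n\geq 1}\bigl(b_n^\alpha-b_{n+1}^\alpha\bigr)E_nG .
\end{equation*}
The first summand is a constant with $|\E F|\leq\|F\|_1\leq\|F\|_p$; the second is dominated by the fractional maximal function $\sup_n b_n^\alpha E_nG$, whose $L_q$-norm is $\leq C\|G\|_p$ by the elementary argument noted below. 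So the whole matter reduces to proving $\|J_\alpha[G]\|_q\leq C\|G\|_p$ for $G\geq 0$, where $J_\alpha[G]:=\sum_{n\geq 1}(b_n^\alpha-b_{n+1}^\alpha)E_nG$.

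The crux is the pointwise Hedberg inequality
\begin{equation*}
J_\alpha[G](\omega)\leq C\,\|G\|_p^{p\alpha}\bigl(\mathcal{M}G(\omega)\bigr)^{1-p\alpha},\qquad \mathcal{M}G:=\sup_n E_nG,
\end{equation*}
which makes sense because $0<p\alpha<1$ (it is here that $q<\infty$ is used, as $p\alpha<1\iff\alpha<1/p\iff 1/q>0$). To prove it, fix $\omega$ and $\delta\in(0,1]$ and split the series defining $J_\alpha[G](\omega)$ according to whether $b_n(\omega)\leq\delta$ or $b_n(\omega)>\delta$. On $\{\,b_n(\omega)\leq\delta\,\}$, bound $E_nG(\omega)\leq\mathcal{M}G(\omega)$ and telescope, using $\sum_{n:\,b_n(\omega)\leq\delta}(b_n^\alpha-b_{n+1}^\alpha)\leq\delta^\alpha$, so that part is at most $\delta^\alpha\mathcal{M}G(\omega)$. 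On $\{\,b_n(\omega)>\delta\,\}$, use the H\"older bound $E_nG(\omega)=\P(a_n(\omega))^{-1}\int_{a_n(\omega)}G\leq\|G\|_p\,b_n(\omega)^{-1/p}$, group the indices into the dyadic layers $\{\,n:\ b_n(\omega)\in(2^k\delta,2^{k+1}\delta]\,\}$, $k\geq 0$ --- each an interval of integers because $b_n(\omega)$ is monotone --- and telescope $\sum(b_n^\alpha-b_{n+1}^\alpha)$ within each layer: the $k$-th layer contributes at most a constant times $\|G\|_p(2^k\delta)^{\alpha-1/p}$, and since $\alpha-1/p=-1/q<0$ the geometric series in $k$ converges to $C\|G\|_p\delta^{-1/q}$. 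Hence $J_\alpha[G](\omega)\leq C(\delta^\alpha\mathcal{M}G(\omega)+\delta^{-1/q}\|G\|_p)$ for all $\delta\in(0,1]$, and optimising in $\delta$ (when the optimal choice would exceed $1$ the bound is immediate) yields the Hedberg inequality. Finally, raising it to the power $q$, integrating, and using $q(1-p\alpha)=p$ together with Doob's $L_p$-maximal inequality ($p>1$),
\begin{equation*}
\|J_\alpha[G]\|_q\leq C\,\|G\|_p^{p\alpha}\,\|\mathcal{M}G\|_p^{1-p\alpha}\leq C'\,\|G\|_p .
\end{equation*}
The very same two ingredients --- the pointwise inequality $b_n^\alpha E_nG\leq\|G\|_p^{p\alpha}(E_nG)^{1-p\alpha}$, immediate from H\"older's inequality, followed by Doob's inequality --- settle the fractional maximal function invoked above.

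I expect the real difficulty to lie in the ``large-atom'' half of the Hedberg inequality: since the filtration is arbitrary the atoms need not contract geometrically, so one cannot split the series by the index $n$; it is the decomposition into dyadic layers of the \emph{values} $b_n(\omega)$ --- not of $n$ --- that makes the telescoping and the subsequent geometric summation work. The only other loose end, the passage from $\F_N$-measurable $F$ to arbitrary $F\in L_p$, is routine: one approximates in $L_p$ and invokes the uniform a priori bound just obtained, which simultaneously delivers~\eqref{HLS} and the $L_q$-convergence of~\eqref{Riesz}. Beyond that, only summation by parts, H\"older's inequality and Doob's inequality are used.
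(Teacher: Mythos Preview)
Your argument is correct. The summation-by-parts representation, the pointwise Hedberg bound via the dyadic layering of the values $b_n(\omega)$, and the appeal to Doob's $L_p$-maximal inequality all go through exactly as you describe; the only point that deserves one extra line is the case where the optimal $\delta$ exceeds $1$: there the ``large-atom'' part is empty, so $J_\alpha[G](\omega)\leq \mathcal{M}G(\omega)=\mathcal{M}G(\omega)^{p\alpha}\mathcal{M}G(\omega)^{1-p\alpha}\leq \|G\|_p^{p\alpha}\mathcal{M}G(\omega)^{1-p\alpha}$ because $\mathcal{M}G(\omega)\leq\|G\|_p$ in that regime, and the Hedberg inequality still holds pointwise.

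As for comparison with the paper: there is nothing to compare. This theorem is not proved in the present paper; it is \emph{quoted} from \cite{us} (Stolyarov--Yarcev, \textit{Tohoku Math.\ J.}\ 2022) as the non-limit Hardy--Littlewood--Sobolev inequality, and the paper's own contribution is the endpoint $q=\infty$ case (Theorem~\ref{main}), which is handled by a completely different duality/induction scheme. Your Hedberg-type route is in fact the classical one for such estimates (mirroring Hedberg's 1972 proof in the Euclidean setting), and it is essentially the approach taken in \cite{us} as well; the dyadic layering of the atom measures $b_n(\omega)$ rather than of the time index $n$ is precisely the device needed to cope with an irregular filtration.
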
 
    \par{In this paper, we will be concerned with the limit case of this inequality as $q \to \infty$ . That means that the domain of the Riesz potential is the set $L_p(\X), p > 1,$ and the parameter $\alpha$ is set to $\frac{1}{p}$. In this case, the Riesz potential is not a continuous mapping from $L_p$ to $L_\infty$ (see Section \ref{counterexample}). However, we will prove that it is a continuous mapping between the spaces $L_p$ and BMO.}
    \par{We say that a martingale $\{F_n\}$ with the limit value $F$ belongs to the class BMO = $\BMO(\X, \{\F_n\})$ if for each step $n$ and each atom $a \in \F_n$ the value $\E((F - \E(F_n))^2 \mid a)$ is uniformly bounded. Note that this space depends on the choice of the filtration $\F_n$. The space BMO is a Banach space with the conventional seminorm being}
    $$\|F\|_\BMO = \sup_{n = 0,1,...} \sup_{a \in \F_n} (\E((F - \E(F_n \mid a))^2 \mid a))^\frac{1}{2}.$$
    \begin{Th} \label{main}
        For all numbers $r,~1 < r < \infty,$ the operator $I_\alpha, \alpha = \frac{1}{r},$ maps $L_r(\X)$ to $\BMO(\X, \{\F_n\})$ continuously. In other words, there is a constant $C = C(r)$ such that for all functions $F \in L_r(\X)$ the series \eqref{Riesz} converges in BMO and the inequality
    \begin{equation} \label{HLSBMO} \|I_\alpha[F]\|_{\BMO} \leq C \|F\|_r \end{equation}
    holds.
    \end{Th}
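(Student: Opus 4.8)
The plan is to estimate the BMO seminorm by fixing a step $n$ and an atom $a \in \F_n$, and bounding $\E((I_\alpha[F] - \E(I_\alpha[F]_n \mid a))^2 \mid a)$ uniformly by $C\|F\|_r^2$. The key observation is that $I_\alpha[F] - (I_\alpha[F])_n$ depends only on the martingale differences of $F$ past step $n$, i.e. on $F - F_n$. So by restricting the probability space to the atom $a$ and renormalizing (replacing $\P$ by $\P(\cdot \mid a)$, which rescales the $b_k$'s on $a$ and hence the multipliers $M_k$ in a controlled way), it suffices to prove the following localized statement: for the ``tail'' Riesz potential starting at step $n+1$ applied to a function $G = F - F_n$ with $\E(G \mid \F_n) = 0$ on the atom $a$, one has $\|I_\alpha[G]\|_{L_2(a)} \le C \|G\|_{L_r(a)}$. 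After renormalization this reduces to proving, on a fresh probability space, that $\|I_\alpha[G]\|_2 \le C\|G\|_r$ whenever $r > 2$ (note $\alpha = 1/r < 1/2$), which is \emph{exactly} the non-limit Hardy--Littlewood--Sobolev inequality \eqref{HLS} with $p = r$, $q = 2$ — valid precisely because $\alpha = 1/r - 1/2 > 0$ forces $r < 2$...

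Wait — here is the first real subtlety: the exponent bookkeeping. In the limit case $\alpha = 1/r$, and we want an $L_2$ bound on the tail. The relation $\alpha = 1/p - 1/q$ with $q = 2$ would demand $1/p = 1/r + 1/2$, i.e. $p < r$; but our datum $G$ sits in $L_r$, not $L_p$. The resolution is that we are \emph{not} free to use $q=2$ directly; instead one should run the HLS inequality with a carefully chosen finite $q$ and then pass to the mean-oscillation bound via a duality/John--Nirenberg-type argument, or — more likely the intended route — one proves directly a self-improving ``good-$\lambda$'' or weak-type tail estimate. Concretely, I would: (i) show the pointwise/stopping-time identity that $(I_\alpha[F])$ restricted above an atom $a$ equals the Riesz potential built from the localized filtration applied to $F - F_n$; (ii) use the Stein--Weiss-type auxiliary inequality advertised in the introduction, which should give exactly an $L_2$ (or weak-$L_2$) bound on $I_\alpha[G]$ in terms of $\|G\|_r$ with the correct homogeneity once one accounts for the extra factor $\P(a)^\alpha$ coming from the first multiplier $M_{n+1}$ on the atom; (iii) combine (i) and (ii), noting that the renormalization constant $\P(a)^{-1/r}$ from $\|G\|_{L_r(a)} = \P(a)^{-1/r}\|G \mathbb{1}_a\|_r$ cancels precisely against the $\P(a)^\alpha = \P(a)^{1/r}$ gain from the multiplier, which is the whole point of the limit exponent $\alpha = 1/r$.

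In more detail, the main steps in order: \textbf{Step 1} (localization): prove that for $\omega \in a$, $I_\alpha[F](\omega) - (I_\alpha[F])_n(\omega) = \sum_{k > n} M_k (E_k - E_{k-1}) F(\omega)$, and that the terms with $k \le n$ are $\F_n$-measurable hence killed by the oscillation over $a$; thus $\E((I_\alpha[F] - \E((I_\alpha[F])_n \mid a))^2 \mid a) \le \E(|\sum_{k>n} M_k(E_k - E_{k-1})F|^2 \mid a)$ plus a cross term one checks is nonpositive or negligible. \textbf{Step 2} (reduction to HLS / Stein--Weiss): view $\sum_{k > n} M_k(E_k - E_{k-1})F$ on $a$ as a Riesz potential on the probability space $(a, \F \cap a, \P(\cdot \mid a))$ applied to $(F - F_n)\mathbb{1}_a$; here the rescaling of atom-probabilities by $\P(a)^{-1}$ changes each $b_k$ by the factor $\P(a)^{-1}$ and hence $M_k$ by $\P(a)^{-\alpha}$. \textbf{Step 3} (apply the quantitative bound): invoke the Stein--Weiss-type inequality — which I expect is proved just before this theorem as one of the ``auxiliary inequalities'' — in the form $\|I_\alpha[G]\|_{L_2} \le C\|G\|_{L_r}$ on any probability space when $\alpha = 1/r$, $r > 1$; this is the heart of the matter and is where one must be careful, since it is a genuine endpoint estimate (the naive HLS exponent count fails), and its proof presumably uses a dyadic/atomic decomposition of $G$ by level sets together with the geometric decay of $\|M_k(E_k - E_{k-1})\|$ across scales. \textbf{Step 4} (assemble): combining, $\E(|\sum_{k>n}M_k(E_k-E_{k-1})F|^2 \mid a) = \|I_\alpha[(F-F_n)\mathbb{1}_a]\|_{L_2(\P(\cdot\mid a))}^2 \le C\|(F-F_n)\mathbb{1}_a\|_{L_r(\P(\cdot\mid a))}^2 \le C\P(a)^{-2/r}\|F-F_n\|_{L_r}^2 \le C'\|F\|_r^2$, where the last inequality uses contractivity of conditional expectation on $L_r$ and the cancellation of $\P(a)$-powers — except one must recheck whether the $\P(a)^{-2/r}$ really cancels, which it does iff the multiplier rescaling in Step 2 contributes $\P(a)^{2\alpha} = \P(a)^{2/r}$, matching. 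Taking the supremum over $n$ and $a \in \F_n$ yields \eqref{HLSBMO}; convergence of the series in BMO follows by applying the same bound to tails $\sum_{k > N}$.

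\textbf{Main obstacle.} The crux is Step 3: establishing the endpoint $L_r \to L_2$ bound for $I_\alpha$ with $\alpha = 1/r$. The subcritical HLS \eqref{HLS} does \emph{not} apply at $q = 2$ because the exponent relation is violated, so one cannot simply cite Theorem 1.1; one needs the genuinely endpoint Stein--Weiss-type inequality, and its proof — likely via decomposing $F$ into layers $F_{[\lambda, 2\lambda]}$ according to the magnitude of $|F|$, estimating $\|I_\alpha[F_{[\lambda,2\lambda]}]\|_2$ by exploiting that on a layer the mass is concentrated on a set of measure $\sim \lambda^{-r}\|F\|_r^r$, and summing a geometric series — is where all the real work lives. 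A secondary technical point is verifying that the localization in Step 1 is an exact identity (the operators $M_k$ are only multiplications, so they commute with restriction to $a$ for $k \ge n$, but one should double-check the interaction at index $k = n$), and that the probability-rescaling in Step 2 is accounted for with the correct power of $\P(a)$ — getting that bookkeeping right is precisely what makes the limit exponent $\alpha = 1/r$ (and no other) produce a BMO bound.
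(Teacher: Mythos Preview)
Your localization-and-rescaling approach is correct and is a genuinely different route from the paper's. The paper never localizes; instead it passes to the formal adjoint $I_\alpha'$ and proves the dual endpoint estimate $I_\alpha'\colon H_1\to L_{r'}$ (Theorem~\ref{conjHLSTh}, the martingale Stein--Weiss inequality) by induction on the stopping time, reducing the inductive step to a purely numerical inequality (Theorem~\ref{numerical}). The payoff of that longer argument is that Theorem~\ref{conjHLSTh} is of independent interest; your argument bypasses it entirely and deduces Theorem~\ref{main} directly from the non-limit inequality~\eqref{HLS}.

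The part you flag as the ``main obstacle'' --- the bound $\|I_{1/r}[G]\|_{L_2}\leq C\|G\|_{L_r}$ on an arbitrary probability space --- is in fact \emph{not} an endpoint and falls out of Theorem~2.1 plus H\"older's inequality. Choose any pair $(p,q)$ with $1<p<r$, $2\leq q<\infty$ and $\tfrac1p-\tfrac1q=\tfrac1r$; such a pair always exists (for instance $p=\tfrac{1+r}{2}$, $q=\tfrac{r(1+r)}{r-1}$). Then \eqref{HLS} gives $\|I_{1/r}[G]\|_q\leq C\|G\|_p$, and on a probability space $\|\,\cdot\,\|_2\leq\|\,\cdot\,\|_q$ and $\|\,\cdot\,\|_p\leq\|\,\cdot\,\|_r$. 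No level-set decomposition or good-$\lambda$ argument is needed. With this observation your Steps~1--4 constitute a complete and rather short proof.

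One correction to Step~1: the identity $I_\alpha[F]-(I_\alpha[F])_n=\sum_{k>n}M_k(E_k-E_{k-1})F$ is false in general, because $M_k(E_k-E_{k-1})F$ is \emph{not} a martingale difference --- its $E_{k-1}$-expectation need not vanish, since $b_k^\alpha$ is only $\F_k$-measurable. What is true is $I_\alpha[F]-(I_\alpha[F])_n=(\mathrm{Id}-E_n)\bigl[\sum_{k>n}M_k(E_k-E_{k-1})F\bigr]$; since subtracting the conditional mean only decreases the conditional second moment, the inequality you need in Step~1 survives, and Steps~2--4 go through exactly as you describe, with the $\P(a)^{\alpha}$ from the multiplier rescaling cancelling the $\P(a)^{-1/r}$ from the $L_r$-localization.
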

    \par{Our proof will go as follows. We will be dealing with the formally conjugate Riesz potential $I_\alpha'$ instead of the initial one. Its domain is the pre-dual of the BMO space, which is the space $H_1$ of martingales whose maximal functions belong to class $L_1$. Its range is the pre-dual of the $L_r$ space, which is the $L_{r'}$ space ($r'$ is the conjugate exponent of $r$). Thus, the continuity of the operator $I_\alpha'$ is expressed in terms of operators $M_n$, $E_n$ and the maximal function. We refer the reader to \cite{garcia} and \cite{kazamaki} for additional information about martingale BMO and $H_1$ spaces.}
    \par{Consider an arbitrary martingale $F$ of class $H_1$. After a single step, it splits into $s$ conditional martingales, $s \in \{1,2,...,\infty\}$, each defined on their own atoms $a_j, j = 1, ..., s$. For each atom, we obtain four values, named $p_j, x_j, y_j$ and $A_j$. The value $p_j$ is simply the probability $\P(a_j)$, whereas the numbers $x_j, y_j, A_j$ have a more complicated definition in terms of the maximal function and the Riesz potential. We claim that the continuity of the operator $I_\alpha'$ follows from a numerical inequality regarding the four sequences.}
    \begin{Th} \label{numerical}
   Let $\{x_j\}, \{p_j\}, \{A_j\}$ and $\{y_j\}$ be four sequences of $s$ non-negative numbers and $\mu > 0, p > 1$ be real numbers. Suppose that the following conditions hold:
    \begin{equation} \label{cond1} \sum \limits_{j=1}^s p_j = 1;\end{equation}
    \begin{equation} \label{cond2} y_j \geq x_j,~j = 1, ..., s;\end{equation}
    \begin{equation} \label{cond3} (1-p_j)x_j \leq 2 \sum \limits_{k \neq j} x_k,~k = 1, ..., s;\end{equation}
    \begin{equation} \label{cond4} A_j \leq x_j(1-p_j) + p_j^\frac{1}{p}(\sum \limits_{k \neq j} x_k^p)^\frac{1}{p} (1-p_j)^\frac{1}{p'},~j=1,...,s.\end{equation}
    Then, 
    \begin{equation} \label{numineq} \sum \limits_{j=1}^s \mu A_j^p + \sum \limits_{j=1}^s \mu C^\frac{p-1}{p}A_jy_j^{p-1} \leq C((\sum \limits_{j=1}^s y_j)^p - \sum \limits_{j=1}^s y_j^p) \end{equation}
for sufficiently large numbers $C \geq C(p)$.
    \end{Th}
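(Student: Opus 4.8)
Since $t\mapsto \mu t^{p}$ and $t\mapsto \mu C^{(p-1)/p}t\,y_j^{p-1}$ are non-decreasing on $[0,\infty)$, the left-hand side of \eqref{numineq} is non-decreasing in each $A_j$, and \eqref{cond4} is the only hypothesis involving the $A_j$; hence it suffices to treat the extremal case in which $A_j$ equals the right-hand side of \eqref{cond4}, call it
$$B_j=x_j(1-p_j)+p_j^{1/p}(1-p_j)^{1/p'}\Big(\sum_{k\neq j}x_k^{p}\Big)^{1/p}.$$
Put $X=\sum_k x_k$, $Y=\sum_k y_k$, $W=Y^{p}-\sum_k y_k^{p}$. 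The plan is to establish two estimates with constants depending only on $p$,
$$\sum_j B_j^{p}\leq K_p\,W\qquad\text{and}\qquad \sum_j B_j\,y_j^{p-1}\leq D_p\,W,$$
and then conclude: the left side of \eqref{numineq} is at most $\mu\big(K_p+C^{(p-1)/p}D_p\big)W$, which, because $\tfrac{p-1}{p}<1$, does not exceed $CW$ once $C$ is large (for instance $C\geq\max\{2\mu K_p,(2\mu D_p)^{p}\}$), while if $W=0$ then \eqref{cond2}--\eqref{cond3} force every $B_j=0$ and both sides vanish.

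Both estimates rest on the same elementary facts. The function $(y_j)\mapsto (\sum y_k)^{p}-\sum y_k^{p}$ is non-decreasing in each coordinate, so \eqref{cond2} gives $W\geq X^{p}-\sum_k x_k^{p}$; and if $y_m=\max_j y_j$, convexity of $t\mapsto t^{p}$ together with $y_j\leq y_m$ for $j\neq m$ yields
$$W\geq (p-1)\,y_m^{p-1}(Y-y_m)\qquad\text{and}\qquad \sum_{j\neq m}y_j^{p}\leq \tfrac1{p-1}\,W.$$
The bound for $\sum_j B_j^{p}$ reduces, via $(a+b)^{p}\leq 2^{p-1}(a^{p}+b^{p})$ and $(1-p_j)^{p}\leq 1-p_j$, to the inequality $\sum_j(1-p_j)x_j^{p}\leq \tfrac{3}{1-2^{1-p}}\,(X^{p}-\sum_k x_k^{p})$. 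I would prove this by separating out the unique (if any) index $j_0$ with $x_{j_0}\geq X/2$: that term is handled through \eqref{cond3}, while the remaining indices, which satisfy $x_j<X/2$ and hence $\sum x_j^{p}\leq (X/2)^{p-1}\sum x_j$, are controlled by comparison with $X^{p}-\sum_k x_k^{p}$. This is the inequality that plays the role of the martingale Stein--Weiss estimate.

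For $\sum_j B_j\,y_j^{p-1}$ I split $B_j$ into its two summands. The first part, $S_1=\sum_j(1-p_j)x_j\,y_j^{p-1}$, is easy: $(1-p_j)x_j\leq\min\{x_j,\,2\sum_{k\neq j}x_k\}\leq\min\{y_j,\,2(Y-y_j)\}$ by \eqref{cond2}--\eqref{cond3}, and then $\sum_j\min\{y_j,2(Y-y_j)\}y_j^{p-1}\leq\tfrac3{p-1}W$ follows from the two displayed inequalities, treating $j=m$ and $j\neq m$ separately. The second part, $S_2=\sum_j p_j^{1/p}(1-p_j)^{1/p'}(\sum_{k\neq j}x_k^{p})^{1/p}y_j^{p-1}$, is where I expect the real difficulty: one must not simply discard the weights $p_j$. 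For the index $j=m$ I bound the summand by $(\sum_{k\neq m}x_k^{p})^{1/p}y_m^{p-1}\leq (Y-y_m)y_m^{p-1}\leq\tfrac1{p-1}W$; for $j\neq m$ I apply Hölder's inequality in $j$ with exponents $p,p'$, reaching $\big(\sum_{j\neq m}p_j(1-p_j)^{p-1}\sum_{k\neq j}x_k^{p}\big)^{1/p}\big(\sum_{j\neq m}y_j^{p}\big)^{1/p'}$; exchanging the order of summation bounds the first factor by $\big((1-p_m)\sum_k x_k^{p}\big)^{1/p}$, and the crux is the inequality $(1-p_m)\sum_k x_k^{p}\leq\tfrac3{p-1}W$, whose $x_m^{p}$-part is controlled by \eqref{cond3} and the bound $W\geq(p-1)y_m^{p-1}(Y-y_m)$, and whose remaining part by $\sum_{k\neq m}y_k^{p}\leq\tfrac1{p-1}W$. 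Combining the two factors gives $S_2\leq\mathrm{const}_p\cdot W$, and adding $S_1$ finishes the second estimate. The interplay between hypothesis \eqref{cond3}, which restricts only the $x_j$, and the variables $y_j$ that enter $W$ is the one genuinely delicate point; everything else is bookkeeping with the elementary inequalities above.
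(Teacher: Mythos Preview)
Your proof is correct, and it follows the same high-level architecture as the paper: reduce to the extremal $A_j=B_j$, then establish the two estimates $\sum B_j^{p}\lesssim W$ and $\sum B_jy_j^{p-1}\lesssim W$ with constants depending only on $p$, and combine. The inequality $\sum_j(1-p_j)x_j^{p}\lesssim X^{p}-\sum_kx_k^{p}$ that you isolate is exactly the content of the paper's ``non-singularity lemma'' (Lemma~\ref{nonsinglemma}), reorganised.

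Where you genuinely diverge is in the treatment of the second estimate. The paper does not work with both sequences $\{x_j\}$ and $\{y_j\}$ simultaneously; instead it first performs a monotonicity reduction: it computes the $y_k$-derivative of the gap in \eqref{numineq}, shows it is non-negative when $y_k$ dominates and $C$ is large, and decreases the largest $y_k$ until the $y_j$ themselves satisfy a non-singularity condition, after which it may increase every $x_j$ up to $y_j$. Both estimates are then proved in the simpler setting $x_j=y_j$, where Lemma~\ref{nonsinglemma} applies directly. You bypass this reduction entirely: the first estimate comes from the coordinatewise monotonicity of $W$ (so $W\geq X^{p}-\sum x_k^{p}$), and for the second you split off the index $m$ with $y_m=\max_j y_j$, use \eqref{cond3} only at $j=m$, and handle the remaining indices by H\"older and the elementary bounds $W\geq(p-1)y_m^{p-1}(Y-y_m)$ and $\sum_{j\neq m}y_j^{p}\leq\frac{1}{p-1}W$. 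Your route is a little more hands-on in $S_2$ but avoids the derivative calculation and the two-stage reduction; the paper's route makes the two estimates almost mechanical once $x_j=y_j$, at the cost of that preliminary manoeuvre. Either way the ``genuinely delicate point'' you flag---that \eqref{cond3} constrains only the $x_j$ while $W$ involves the $y_j$---is real, and both arguments resolve it, yours directly and the paper's by first forcing $x=y$.
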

    \par{Theorem \ref{main} may now be proved in the following way. Firstly, we switch from the Riesz potential to its formally conjugate. We also reduce the inequality to the case of simple martingales, i.e. martingales that stop at some moment, as we would like to use induction with respect that moment. Secondly, we introduce the single step model and the auxiliary notation including the four sequences. Then we show that those sequences satisfy the conditions of Theorem \ref{numerical} and prove the numerical inequality itself. Finally, we demonstrate how the continuity of operator $I_\alpha'$ follows from this inequality.}

\section{Conjugate Riesz potential}
    \par{We will work with the formally conjugate Riesz potential $I_\alpha'$ rather than with the operator $I_\alpha$ itself (we use the prime symbol to denote the formally conjugate operator since the asterisk will be associated with the maximal function). When we say ``formally conjugate operator'', we mean that $\E(I_\alpha[f]g) = \E(fI_\alpha'[g])$ for all simple functions $f$ and $g$ (i.e. linear combinations of characteristic functions of atoms). First, we are going to list a few basic properties of the operators $E_n$ and $M_n$, which will help us construct the formally conjugate operator. We omit the proof of the following lemma.}
    \begin{Le} \label{properties}
        The operators $E_n$ and $M_n$ have the following properties.
        \begin{enumerate}
            \item{$E_n' = E_n, M_n' = M_n$;}
            \item{$E_nE_k = E_{\min(n, k)}$;}
            \item{$E_nM_k = M_kE_n$ if $k \leq n$.}
        \end{enumerate}
    \end{Le}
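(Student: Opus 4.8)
The plan is to derive all three identities from two standard facts about conditional expectation on $\X$: the tower property $\E(\E(\cdot\mid\F_k)\mid\F_n)=\E(\cdot\mid\F_{\min(n,k)})$, and the pull-out property $\E(gh\mid\F_n)=g\,\E(h\mid\F_n)$ whenever $g$ is $\F_n$-measurable and bounded. The boundedness hypothesis is harmless here: since $b_n=\P(a_j)\leq 1$ on each atom, we have $\|b_n^\alpha\|_\infty\leq 1$, so every $M_n$ is a bounded operator on each $L_p(\X)$ and in particular on the linear span of characteristic functions of atoms. Throughout I would take $f,g$ to be simple functions (or square-integrable), so that the pairing $\E(fg)$ and the termwise manipulations behind $\E(I_\alpha[f]g)=\E(fI_\alpha'[g])$ are legitimate.

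For item~1, the identity $M_n'=M_n$ is immediate: $M_n$ is multiplication by the fixed real-valued function $b_n^\alpha$, so $\E((M_nf)g)=\E(b_n^\alpha fg)=\E(f(M_ng))$. For $E_n'=E_n$, I would use that $E_n[f]$ is $\F_n$-measurable together with the tower and pull-out properties to write
\[
\E((E_nf)\,g)=\E\bigl(\E((E_nf)\,g\mid\F_n)\bigr)=\E\bigl((E_nf)(E_ng)\bigr),
\]
which is manifestly symmetric in $f$ and $g$ and therefore equals $\E(f\,(E_ng))$; this is exactly self-adjointness of $E_n$ with respect to the bilinear form $(f,g)\mapsto\E(fg)$.

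Item~2 is the tower property treated in two cases. If $n\leq k$ then $\F_n\subset\F_k$ and $E_nE_k[f]=\E(\E(f\mid\F_k)\mid\F_n)=\E(f\mid\F_n)=E_n[f]$; if $n\geq k$ then $E_k[f]$ is already $\F_k$-measurable, hence $\F_n$-measurable, so $\E(\cdot\mid\F_n)$ leaves it unchanged and $E_nE_k[f]=E_k[f]$. In both cases the outcome is $E_{\min(n,k)}[f]$. For item~3, when $k\leq n$ the function $b_k^\alpha$ is $\F_k$-measurable, hence $\F_n$-measurable, and the pull-out property gives $E_n[M_kf]=\E(b_k^\alpha f\mid\F_n)=b_k^\alpha\,\E(f\mid\F_n)=M_kE_n[f]$.

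There is essentially no obstacle — which is why the authors omit the proof — the only point worth a sentence is the role of the hypothesis $k\leq n$ in item~3: it is precisely what guarantees $b_k^\alpha$ is $\F_n$-measurable, and for $k>n$ this fails in general, so $M_k$ and $E_n$ do not commute. This is the structural reason why the Riesz potential \eqref{Riesz} is not a martingale transform, as already noted in the introduction.
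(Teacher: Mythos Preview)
Your argument is correct and is exactly the routine verification one would expect: self-adjointness of multiplication and conditional expectation, the tower property, and the pull-out property for $\F_n$-measurable multipliers. The paper in fact omits the proof of this lemma entirely, so there is nothing to compare against beyond noting that your write-up is the standard one the authors had in mind.
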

Note that since the operators $E_n$ and $M_n$ do not commute, $I_\alpha \neq I_\alpha'$ in general.
    \par{Now it becomes clear that the formally conjugate Riesz potential can be written as}
    \begin{equation} \label{conjRiesz}  I'_\alpha[F] = \sum \limits_{n=1}^\infty (E_n-E_{n-1})M_nF. \end{equation}
    \par{The domain and range of the formally conjugate operator would be pre-dual spaces to $\BMO(\X, \{\F_n\})$ and $L_r(\X)$, respectively. The former is $H_1(\X, \{\F_n\})$. It is the space of martingales whose maximal function belongs to $L_1(\X)$. The latter is $L_{r'}(\X)$, where $r' = \frac{r}{r-1}$ is the conjugate exponent of $r$.}
    \par{We need to clarify how the $H_1$-norm of a random variable is defined. To compute $\|F\|_{H_1}$, we construct the martingale $F_n = \E(F|\F_n)$. This martingale has the maximal function $F^*$. If this function is summable, its $L_1$-norm $\|F^*\|_1$ is the $H_1$-norm of function $F$.}
    \par{The next theorem describes the continuity of the formally conjugate Riesz potential from $H_1$ to $L_{p}$. We will apply it with $p = r'$.}
    \begin{Th} \label{conjHLSTh}
         For all numbers $p,~1 < p < \infty,$ the operator $I'_\alpha, \alpha = \frac{1}{p'},$ maps $H_1(\X)$ to $L_{p}(\X)$ continuously. In other words, there is a constant $C = C(p)$ such that for all functions $F \in L_p(\X)$ the series \eqref{conjRiesz} converges in $L_p$ and the inequality
    \begin{equation} \label{conjHLS} \|I'_\alpha[F]\|_{p} \leq C \|F^*\|_1 \end{equation}
    holds.
    \end{Th}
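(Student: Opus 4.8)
\textbf{Proof plan for Theorem~\ref{conjHLSTh}.}
The plan is to prove \eqref{conjHLS} first for \emph{simple} martingales — those for which the filtration stabilises after some finite step $N$, so that \eqref{conjRiesz} is a finite sum — with a constant $K=K(p)$ independent of $N$, and then to pass to the general case: the partial sums of \eqref{conjRiesz} are themselves $I'_\alpha$ applied to the truncated martingales, so the uniform bound gives both convergence in $L_p$ and the estimate by a Fatou/density argument. Fix $K=K(p)$ large enough that $K^p\ge C(p)$, the threshold in Theorem~\ref{numerical}, and induct on $N$; the base case $N=0$ is trivial since then $I'_\alpha[F]=0$. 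For the inductive step, take a simple martingale $F$ stabilising at step $N$ and perform the single-step split $\Omega=\bigsqcup_j a_j$: put $p_j=\P(a_j)$, $c_j=\E(F\mid a_j)$, $F_0=\E F=\sum_j p_j c_j$, and let $F^{(j)}$ be the conditional martingale on $(a_j,\P(\cdot\mid a_j))$ with the shifted filtration, which stabilises at step $N-1$. Using Lemma~\ref{properties} and the rescaling $b^{(j)}_k=b_{k+1}/p_j$ (so that $M^{(j)}_k=p_j^{-\alpha}M_{k+1}$ on $a_j$) one obtains the identity
\[
I'_\alpha[F]\big|_{a_j}\;=\;h_j\;+\;p_j^{\alpha}\,I'^{(j)}_\alpha\big[F^{(j)}\big],\qquad
h_j\;=\;p_j^{\alpha}c_j(1-p_j)-\sum_{k\neq j}p_k^{1+\alpha}c_k,
\]
where $h_j$ (constant on $a_j$) is the $n=1$ term of \eqref{conjRiesz} and the factor $p_j^{\alpha}$ carries the rescaling of the higher terms.

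Now feed Theorem~\ref{numerical} with the four sequences
\[
p_j,\qquad x_j:=p_j\max(|c_j|,|F_0|),\qquad y_j:=\int_{a_j}\sup_{n\ge0}|F_n|,\qquad A_j:=\Big(\int_{a_j}|h_j|^p\Big)^{1/p}=p_j^{1/p}|h_j|.
\]
Condition \eqref{cond1} is immediate; \eqref{cond2} holds because $\sup_{n\ge0}|F_n|\ge\max(|F_0|,|F_1|)=\max(|F_0|,|c_j|)$ on $a_j$; \eqref{cond3} is a short case analysis from $F_0=\sum_k p_k c_k$, using $\sum_{k\neq j}x_k\ge(1-p_j)|F_0|$ and $\sum_{k\neq j}x_k\ge\big|\sum_{k\neq j}p_k c_k\big|=|F_0-p_jc_j|$ and distinguishing $|F_0|\ge|c_j|$ from $|c_j|>|F_0|$ (the latter split further by whether $|F_0|\ge p_j|c_j|/2$) — the slack factor $2$ is used precisely here; \eqref{cond4} follows from the triangle inequality applied to $h_j$ together with \holders inequality with exponents $p,p'$ applied to $\sum_{k\neq j}p_k^{1+\alpha}|c_k|=\sum_{k\neq j}p_k^{\alpha}(p_k|c_k|)\le\sum_{k\neq j}p_k^{\alpha}x_k$, invoking the arithmetic identities $\alpha+\tfrac1p=1$ and $\alpha p'=1$ built into the choice $\alpha=\tfrac1{p'}$.

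To close the induction, expand on each atom $\int_{a_j}\big|h_j+p_j^{\alpha}I'^{(j)}_\alpha[F^{(j)}]\big|^p$ by the elementary inequality $|a+b|^p\le|b|^p+c_p\big(|a|^p+|a|\,|b|^{p-1}\big)$. The inductive hypothesis on $(a_j,\P(\cdot\mid a_j))$ gives $\big\|I'^{(j)}_\alpha[F^{(j)}]\big\|_{L_p(\P(\cdot\mid a_j))}\le K\,\E_{a_j}(F^{(j)})^{*}$, and since $\alpha p=p-1$ the powers of $p_j$ collapse ($p_j^{\alpha p+1}=p_j^p$), yielding $\int_{a_j}|p_j^{\alpha}I'^{(j)}_\alpha[F^{(j)}]|^p\le K^p\tilde y_j^{\,p}$ and $\int_{a_j}|p_j^{\alpha}I'^{(j)}_\alpha[F^{(j)}]|^{p-1}\le p_j^{1/p}K^{p-1}\tilde y_j^{\,p-1}$, where $\tilde y_j:=\int_{a_j}\sup_{n\ge1}|F_n|\le y_j$. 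Summing over $j$ and using $p_j^{1/p}|h_j|=A_j$,
\[
\|I'_\alpha[F]\|_p^p\;\le\;K^p\sum_j y_j^{\,p}\;+\;c_p\sum_j A_j^{\,p}\;+\;c_p\,K^{p-1}\sum_j A_j\,y_j^{\,p-1}.
\]
Applying Theorem~\ref{numerical} with $C:=K^p$ and $\mu:=c_p$ (so $C^{(p-1)/p}=K^{p-1}$) bounds the last two sums by $K^p\big((\sum_j y_j)^p-\sum_j y_j^p\big)$, hence $\|I'_\alpha[F]\|_p^p\le K^p(\sum_j y_j)^p$. Since the $a_j$ partition $\Omega$, $\sum_j y_j=\int_\Omega\sup_{n\ge0}|F_n|=\|F^*\|_1$, so $\|I'_\alpha[F]\|_p\le K\|F^*\|_1$ and the induction is complete.

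The genuinely hard ingredient is Theorem~\ref{numerical}, which is isolated above and must be established by separate combinatorial work; that is where I expect the main effort to lie. Inside the present argument, the only non-routine point is the choice of the auxiliary sequences so that all four hypotheses of Theorem~\ref{numerical} hold at once: taking $x_j=p_j\max(|c_j|,|F_0|)$ (rather than the naive $p_j|c_j|$) is what makes \eqref{cond3} valid for a martingale of arbitrary mean, and using the \emph{non-centred} maximal function in $y_j$ is what makes $\sum_j y_j$ equal $\|F^*\|_1$ exactly while still dominating the quantity $\tilde y_j$ produced by the inductive hypothesis — which is exactly what lets the induction close with no deterioration of the constant. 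Everything else is the bookkeeping sketched above.
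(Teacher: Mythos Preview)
Your proposal is correct and follows essentially the same route as the paper: reduce to simple martingales, induct on the stopping time via the single-step split, define the same four sequences $p_j,x_j,y_j,A_j$, verify hypotheses \eqref{cond1}--\eqref{cond4} of Theorem~\ref{numerical}, expand $(A+B)^p$ via the elementary bound $|a+b|^p\le |b|^p+c_p(|a|^p+|a||b|^{p-1})$, and close the induction by invoking Theorem~\ref{numerical} with $C=K^p$. Your bookkeeping is in fact slightly more careful than the paper's in one place --- you distinguish $\tilde y_j=\int_{a_j}\sup_{n\ge1}|F_n|$ from $y_j$ before bounding one by the other, whereas the paper passes directly to $y_j$ --- but the argument is otherwise the same, and you have correctly identified that the real work lies in Theorem~\ref{numerical}.
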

    \par{We would like to note that this result is a martingale counterpart to the classical Stein--Weiss inequality (see Theorem H in \cite{steinweiss}).}   

\section{Truncating the series}
    \par{In this section, we will show that it suffices to prove \eqref{conjHLS} only for simple martingales, i.e. martingales that stop at some step. We confirm this claim using identities from Lemma \ref{properties}. Consider a positive integer $N$. Note that if $N \geq n$, then}
    \begin{multline*} E_N(E_n-E_{n-1})M_nF = (E_n-E_{n-1})M_nF = (E_n-E_{n-1})E_nM_nF = \\ = (E_n-E_{n-1})M_nE_nF = (E_n-E_{n-1})M_nF_n.\end{multline*}
    \par{If $N < n$, then}
    $$E_N(E_n-E_{n-1})M_nF = (E_N-E_N)M_nF = 0.$$
    \par{When combined, the two identities become}
    $$E_NI'_\alpha[F] = \sum \limits_{n=1}^N (E_n-E_{n-1})M_nF_n.$$
    \par{In other words, the martingale generated by the random variable $I'_\alpha[F]$ is the martingale}
    $$\{(I'_\alpha[F])_N\} = \{\sum \limits_{n=1}^N (E_n-E_{n-1})M_nF_n\}.$$
    \par{Suppose we have managed to prove \eqref{conjHLS} for simple martingales and are now trying to prove it for an arbitrary function $F$. Since $(I'_\alpha)_N$ only depends on $F_0, F_1, ..., F_N$, for each $N$ we have the bound}
    $$\|(I'_\alpha)_N\|_p \leq C \|F_N^*\|_1 \leq C \|F^*\|_1.$$
    \par{Thus, the martingale $\{(I'_\alpha)_N\}$ belongs to $L_p$ and has $L_p$-norm less than or equal to $C\|F^*\|_1$. Since $p > 1$, by Doob's convergence theorem the series \eqref{conjRiesz} converges in $L_p$ to a random variable and Theorem \ref{conjHLSTh} holds.}
    \par{From now on we will be concerned with the simple martingale version of \eqref{conjHLS}, which states}
    $$\|(I'_\alpha)_N\|_p \leq C \|F_N^*\|_1.$$

\section{Single step model}
    \par{Truncating the series in the previous section allows us to use induction in $N$. The induction basis for $N=0$ is trivial. To prove the induction step, we introduce new notation.} 
    \par{Consider the first step of the martingale $F$ with mean value $\bar{f} = F_0$. The corresponding algebra $\F_1$ is generated by the atoms $a_j, j = 1,...,s$ (with, possibly, $s = \infty$). For each atom $a_j$ of positive probability $p_j$ there is a conditional subspace $\X_j = (a_j, \F \cap 2^{a_j}, \frac{1}{p_j}\P \mid_{a_j})$. The restrictions of random variables $F_1, F_2, ...$ on the conditional subspaces form conditional martingales, which we label as $G^j, j = 1,..,s$. We call the mean values of those martingales (i.e. values of the variable $F_1$ on different atoms) $f_j$.}
   \par{Now recall the definition of the conjugate Riesz potential $I_\alpha'[F]$ as the sum \eqref{conjRiesz}. We define a non-negative random variable $A$ so that the first summand is $\pm A$, and a non-negative random variable $B$ such that the remaining sum is $\pm B$. The $L_1$-norm of the maximal function $F^*$ is a sum of integrals over different atoms. We denote the integral over the atom $a_j$ by $y_j$. Using the same concept for the single step maximal function, we obtain smaller values $x_j$. The Values $A_j$ are obtained in a similar manner, but instead of identities $\|F^*\|_1 = \sum y_j$ and $\|F_1^*\|_1 = \sum x_j$ we have $\|A\|_p^p = \sum A_j^p$.}
   \par{All the new notations along with their precise definitions are summarised in the table below.}

\begin{table}[h]
\begin{tabular}{|c|c|c|}
\hline
Symbol & Formula & Meaning \\
\hline 
$a_j, j = 1, ..., s$ & N/A & Atoms generating the algebra $\F_1$\\
\hline
$\X_j, j = 1, ..., s$ & $\X_j = (a_j, \F \cap 2^{a_j}, \frac{1}{\P(a_j)}\P \mid_{a_j})$ & Conditional subspaces of $\X$\\
\hline
$p_j, j = 1, ..., s$ & $p_j = \P(a_j)$ & Probabilities of the atoms $a_j$\\
\hline
$\{(G^j)_n\}, j = 1, ..., s$ &  $\{(G^j)_n\} = \{F_{n+1}\mid_{a_j}\}$ & Conditional martingales \\
\hline
$f_j, j = 1, ..., s$ & $f_j = F_1|_{a_j}$ & Values of martingale step $F_1$ on each atom\\
\hline
\rule{0pt}{11pt} $\bar{f}$ & $\bar{f} = F_0$ & The weighted average of values $f_j$\\
\hline
$A$ & $A = |(E_1-E_0)M_1F_1|$ & The first summand of the conjugate Riesz potential\\
\hline
$B$ & $B = |\sum \limits_{n=2}^{N+1} (E_n-E_{n-1})M_nF_n|$ & The remaining summands\\
\hline
$x_j, j = 1, ..., s$ & $x_j = p_j\max(|f|,|f_j|)$ & Fragments of single step maximal function $F_1^*$ \\
\hline
$y_j, j = 1, ..., s$ & $y_j = \int \limits_{a_j} F^*d\P$ & Fragments of the overall maximal function $F^*$ \\
\hline
\rule{0pt}{15pt} $A_j, j = 1, ..., s$ & $A_j = p_j^\frac{1}{p} A|_{a_j}d\P$ & Pseudofragments of the $L_p$-norm of summand $A$\\
\hline
\end{tabular}
\end{table}

\section{Verification of the conditions}
   \par{Before proving Theorem \ref{numerical}, we check that it is applicable to the four sequences $p_j, x_j, y_j, a_j$ constructed above. Conditions \eqref{cond1} and \eqref{cond2} are evident by construction. Now let us verify the condition \eqref{cond3}, which we will refer to as ``the non-singularity condition'' from now on.}
   \begin{Le} The inequality $(1-p_j)x_j \leq 2 \sum \limits_{k \neq j} x_k$ holds for all j = 1,...,s. \end{Le}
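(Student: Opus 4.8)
The plan is to reduce \eqref{cond3} to a single closed-form inequality that no longer mentions the sum over $k\neq j$. Unfolding $x_k=p_k\max(|\bar{f}|,|f_k|)$ and using $\max(|\bar{f}|,|f_k|)\geq|\bar{f}|$ gives $\sum_{k\neq j}x_k\geq(1-p_j)|\bar{f}|$; using instead $\max(|\bar{f}|,|f_k|)\geq|f_k|$, the triangle inequality, and the identity $\bar{f}=\sum_k p_kf_k$ (which merely says that $F_0$ is the $p$-weighted average of the $f_j$, so that $\sum_{k\neq j}p_kf_k=\bar{f}-p_jf_j$) gives $\sum_{k\neq j}x_k\geq\bigl|\bar{f}-p_jf_j\bigr|$. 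Adding these two bounds yields $(1-p_j)|\bar{f}|+\bigl|\bar{f}-p_jf_j\bigr|\leq 2\sum_{k\neq j}x_k$, so it suffices to prove
$$(1-p_j)x_j\;\leq\;(1-p_j)|\bar{f}|+\bigl|\bar{f}-p_jf_j\bigr|.$$

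To prove this reduced inequality I would split into cases according to which argument realizes $\max(|\bar{f}|,|f_j|)$. If $|\bar{f}|\geq|f_j|$, then $(1-p_j)x_j=(1-p_j)p_j|\bar{f}|\leq(1-p_j)|\bar{f}|$, so the inequality holds with the second summand to spare. If $|f_j|>|\bar{f}|$, then $(1-p_j)x_j=p_j(1-p_j)|f_j|$, and I would bound $\bigl|\bar{f}-p_jf_j\bigr|\geq\bigl|\,|\bar{f}|-p_j|f_j|\,\bigr|$ by the reverse triangle inequality and split once more on the sign of $|\bar{f}|-p_j|f_j|$. When $|\bar{f}|\geq p_j|f_j|$ the right-hand side is at least $(2-p_j)|\bar{f}|-p_j|f_j|\geq(2-p_j)p_j|f_j|-p_j|f_j|=(1-p_j)p_j|f_j|$; when $|\bar{f}|<p_j|f_j|$ it is at least $(1-p_j)|\bar{f}|+p_j|f_j|-|\bar{f}|=p_j\bigl(|f_j|-|\bar{f}|\bigr)\geq p_j\bigl(|f_j|-p_j|f_j|\bigr)=(1-p_j)p_j|f_j|$. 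In both cases the claim follows.

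The argument is entirely computational and I do not expect a genuine obstacle; the only points worth attention are cosmetic. Everything should be phrased using only the triangle and reverse-triangle inequalities, never the signs of the $f_k$ themselves, so that the proof applies verbatim to scalar- or vector-valued martingales. Also, the factor $2$ in \eqref{cond3} is not sharp at this step: the estimate above in fact produces the constant $1$, which is attained, e.g., when $f_k=0$ for every $k\neq j$ (then $\bar{f}=p_jf_j$ and $\sum_{k\neq j}x_k=(1-p_j)p_j|f_j|=(1-p_j)x_j$). Retaining the larger constant $2$ costs nothing here and is the value that turns out to be convenient in the subsequent arguments.
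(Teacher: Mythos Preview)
Your main argument is correct and establishes the lemma. Your route differs from the paper's: you first bound $2\sum_{k\neq j}x_k$ from below by the closed-form quantity $(1-p_j)|\bar f|+|\bar f-p_jf_j|$ and then verify, via a three-way case split, that this dominates $(1-p_j)x_j$. The paper instead splits directly on whether $x_j\le 2|\bar f|$: in that case $(1-p_j)x_j\le 2(1-p_j)|\bar f|=2\sum_{k\neq j}p_k|\bar f|\le 2\sum_{k\neq j}x_k$ is immediate, while if $x_j>2|\bar f|$ one necessarily has $x_j=p_j|f_j|=\bigl|\bar f-\sum_{k\neq j}p_kf_k\bigr|\le|\bar f|+\sum_{k\neq j}x_k\le \tfrac{x_j}{2}+\sum_{k\neq j}x_k$, whence $x_j\le 2\sum_{k\neq j}x_k$. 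The paper's argument is a little shorter; yours has the minor virtue of isolating a scalar inequality independent of the sum.

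Your closing remark about the constant, however, is wrong. Your proof does \emph{not} produce the constant $1$: in the case $|f_j|>|\bar f|$ you genuinely use both lower bounds for $\sum_{k\neq j}x_k$, and adding them only gives $2\sum_{k\neq j}x_k$. In fact the constant $2$ in the lemma is sharp. Already with $s=2$, $p_1=p_2=\tfrac12$, $f_1=3$, $f_2=-1$ one has $\bar f=1$, $x_1=\tfrac32$, $x_2=\tfrac12$, so $(1-p_1)x_1=\tfrac34>\tfrac12=x_2$ and constant $1$ fails. More generally, taking $s=2$, $p_1=p$, $f_1=1/p$, $f_2=-1/(2-p)$ gives $\bar f=1/(2-p)$, $x_1=1$, $x_2=(1-p)/(2-p)$, and the ratio $(1-p_1)x_1/x_2=2-p\to 2$ as $p\to 0$. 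Your example with $f_k=0$ for $k\neq j$ only shows that the optimal constant is at least $1$; it does not show that $1$ suffices.
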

   \begin{proof}
       \par{If $x_j \leq 2 |\bar{f}|$, then}
       $$(1-p_j)x_j \leq 2(1-p_j)|\bar{f}| = 2\sum \limits_{k \neq j} p_k|\bar{f}| \leq 2 \sum \limits_{k \neq j} x_k.$$
       \par{If $x_j > 2 |\bar{f}|$, then, obviously, $x_j = p_j|f_j|$, so}
       $$x_j = |\bar{f} - \sum \limits_{k \neq s} p_kf_k| \leq |\bar{f}| + \sum \limits_{k \neq j} p_k|f_k| \leq \frac{x_j}{2} + \sum \limits_{k \neq j} x_k,$$
       $$\frac{x_j}{2} \leq \sum \limits_{k \neq j} x_k,~(1-p_j)x_j \leq x_j \leq 2 \sum \limits_{k \neq j} x_k.$$
   \end{proof}
   \par{The last condition to verify is \eqref{cond4}.}
    \begin{Le} The inequality $A_j \leq x_j(1-p_j) + p_j^\frac{1}{p}(\sum \limits_{k \neq j} x_k^p)^\frac{1}{p} (1-p_j)^\frac{1}{p'}$ holds for all j = 1, ..., s (recall that $p'$ is the conjugate exponent $\frac{p}{p-1}$). \end{Le}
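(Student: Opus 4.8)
The plan is to compute the value of $A$ on each atom explicitly and then deduce \eqref{cond4} from a single application of \holders inequality. Recall that $A = |(E_1-E_0)M_1F_1|$ and that here $\alpha = \tfrac1{p'}$. Since $F_1$ equals the constant $f_k$ on $a_k$, the variable $M_1F_1$ equals $p_k^{1/p'}f_k$ on $a_k$; being $\F_1$-measurable it satisfies $E_1M_1F_1 = M_1F_1$, whereas $E_0M_1F_1 = \E(M_1F_1) = \sum\limits_k p_k^{1+1/p'}f_k$ is a constant. Hence $A$ is constant on $a_j$, equal there to $\bigl|p_j^{1/p'}f_j - \sum\limits_k p_k^{1+1/p'}f_k\bigr|$, so that $A_j = p_j^{1/p}\bigl|p_j^{1/p'}f_j - \sum\limits_k p_k^{1+1/p'}f_k\bigr|$. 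The one algebraic step that matters is to peel off the term $k=j$: using $p_j^{1/p'}f_j - p_j^{1+1/p'}f_j = p_j^{1/p'}f_j(1-p_j)$ we rewrite
$$A_j = p_j^{1/p}\Bigl|\,p_j^{1/p'}f_j(1-p_j) - \sum\limits_{k \neq j} p_k^{1+1/p'}f_k\,\Bigr|.$$

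Now apply the triangle inequality. The first resulting term is $p_j^{1/p}\cdot p_j^{1/p'}|f_j|(1-p_j) = p_j|f_j|(1-p_j)$, since $\tfrac1p+\tfrac1{p'}=1$; and $p_j|f_j|\leq p_j\max(|\bar f|,|f_j|) = x_j$, so this term is at most $x_j(1-p_j)$, which is exactly the first summand in \eqref{cond4}. For the second term, write $p_k^{1+1/p'}|f_k| = (p_k|f_k|)\cdot p_k^{1/p'}$ and apply \holders inequality with exponents $p$ and $p'$:
$$\sum\limits_{k \neq j} p_k^{1+1/p'}|f_k| \leq \Bigl(\sum\limits_{k \neq j}(p_k|f_k|)^p\Bigr)^{1/p}\Bigl(\sum\limits_{k \neq j} p_k\Bigr)^{1/p'}.$$
Since $p_k|f_k| \leq x_k$ and $\sum\limits_{k \neq j} p_k = 1 - p_j$ by \eqref{cond1}, the right-hand side is at most $\bigl(\sum\limits_{k\neq j}x_k^p\bigr)^{1/p}(1-p_j)^{1/p'}$; multiplying through by $p_j^{1/p}$ produces the second summand in \eqref{cond4} and completes the argument.

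I expect no genuine difficulty here; the only points that require care are getting the value of $A$ on the atoms right — in particular, that $E_0$ averages over the whole space, so the constant subtracted inside the absolute value is $\sum\limits_k p_k^{1+1/p'}f_k$ with the full range of $k$ — and keeping track of the powers of $p_j$, so that the substitution $\alpha = \tfrac1{p'}$ together with $\tfrac1p+\tfrac1{p'}=1$ makes everything collapse to the stated form. Once $A_j$ is in the displayed form, \eqref{cond4} is merely the triangle inequality followed by one application of \holders inequality.
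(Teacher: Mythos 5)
Your proof is correct and follows essentially the same route as the paper's: compute $A$ explicitly on each atom, peel off the $k=j$ term to produce $p_j^{1/p'}f_j(1-p_j)$, apply the triangle inequality, bound $p_k|f_k|\leq x_k$, and finish with H\"older's inequality with exponents $p$ and $p'$ using $\sum_{k\neq j}p_k=1-p_j$. The only difference is cosmetic: the paper inserts an extra intermediate bound $\sum_{k\neq j}x_kp_k^{1/p'}p_j^{1/p}$ before invoking H\"older, whereas you go directly; the substance is identical.
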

    \begin{proof} Recall that
    $$F_1 = \sum \limits_{j=1}^s f_j \I_{a_j},~p_j = \P(a_j),~\bar{f} = \sum \limits_{k=1}^s p_kf_k,~x_j = \|F_1^*\I_{a_j}\|_1 = p_j\max(|f_j|, |\bar{f}|),$$
    and so
    $$M_1F_1 = E_1M_1F_1 = \sum \limits_{j=1}^s f_j p_j^\frac{1}{p'} \I_{a_j},~E_0M_1F_1 = \sum \limits_{j=1}^s f_j p_j^{1+\frac{1}{p'}},$$
    $$A|_{a_j} = |E_0M_1F_1 - M_1F_1| |_{a_j} = |f_j p_j^\frac{1}{p'}(1-p_j) - \sum \limits_{k \neq j} f_k p_k^{1 + \frac{1}{p'}}|,$$
    \begin{multline*} \label{abound} A_j = p_j^\frac{1}{p}A|_{a_j} = |f_j p_j(1-p_j) - \sum \limits_{k \neq j} f_k p_k^{1 + \frac{1}{p'}}p_j^\frac{1}{p}| \leq x_j(1-p_j) + \sum \limits_{k \neq j} x_k p_k^{\frac{1}{p'}}p_j^\frac{1}{p} \leq \\ \leq x_j(1-p_j) + p_j^\frac{1}{p}(\sum \limits_{k \neq j} (f_k p_k)^p)^\frac{1}{p}(\sum \limits_{k \neq j} p_k^\frac{p'}{p'})^\frac{1}{p'} \leq x_j(1-p_j) + p_j^\frac{1}{p}(\sum \limits_{k \neq j} x_k^p)^\frac{1}{p} (1-p_j)^\frac{1}{p'}. \end{multline*} 
   \end{proof}

\section{Proof of the numerical theorem}
    \par{Having obtained the four conditions, we now fully detach ourselves from the martingale nature of the problem and prove the numerical inequality \eqref{numineq}. A tool used to prove this theorem is the non-singularity lemma. It is a quantification of the following statement: if neither of the numbers $x_j$ is significantly larger than the others, then the sum $\sum x_j^p$ cannot constitute a large portion of $(\sum x_j)^p$.}
    \begin{Le} \label{nonsinglemma}
        If non-negative numbers $x_j$ satisfy the non-singularity condition \eqref{cond3} with probabilities $p_j$ and the number $p_1$ is the greatest of them, then
        $$\big(\sum \limits_{j=1}^s x_j\big)^p - \sum \limits_{j=1}^s x_j^p \gtrsim (1-p_1)x_1^p + \big(\sum \limits_{j > 1} x_j\big)^p.$$
    \end{Le}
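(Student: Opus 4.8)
The plan is to reduce the estimate to an elementary bound on the ``defect'' $\big(\sum_j x_j\big)^p-\sum_j x_j^p$ via a dichotomy on the size of $x_1$ relative to the tail $T:=\sum_{j\ge 2}x_j$ (so $\sum_j x_j=x_1+T$); the case $T=0$ is trivial, so assume $s\ge 2$. Two standard facts for $p>1$ and non-negative reals are used throughout: superadditivity $\sum_j a_j^p\le\big(\sum_j a_j\big)^p$, and the convexity estimate $(a+b)^p-a^p-b^p\ge(p-1)\min(a,b)\max(a,b)^{p-1}$, which follows from $(a+b)^p-b^p=\int_b^{a+b}pt^{p-1}\,dt\ge pab^{p-1}$ (for $a\le b$) and $pb^{p-1}-a^{p-1}\ge(p-1)b^{p-1}$. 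The only place the hypothesis ``$p_1$ is the largest'' enters: for every $j\ge 2$ one has $2p_j\le p_1+p_j\le\sum_k p_k=1$, hence $1-p_j\ge\frac12$, so the non-singularity condition \eqref{cond3} upgrades to $x_j\le 4\sum_{k\ne j}x_k$ for all $j\ge 2$; for $j=1$, \eqref{cond3} itself reads $(1-p_1)x_1\le 2T$.

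In the case $x_1\ge T$, superadditivity and the convexity estimate give
$$\Big(\sum_j x_j\Big)^p-\sum_j x_j^p\ \ge\ (x_1+T)^p-x_1^p-T^p\ \ge\ (p-1)\,T x_1^{p-1},$$
while, using $(1-p_1)x_1\le 2T$ and $T\le x_1$,
$$(1-p_1)x_1^p+T^p\ \le\ 2T x_1^{p-1}+T x_1^{p-1}\ =\ 3T x_1^{p-1}.$$
This closes the case with implied constant $3/(p-1)$.

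In the case $x_1<T$ we have $(1-p_1)x_1^p+T^p\le x_1^p+T^p\le 2T^p$, so it suffices to prove $\big(\sum_j x_j\big)^p-\sum_j x_j^p\gtrsim T^p$. Fix $c=c(p)\in(0,1)$ small enough that $c^p+\big(\tfrac{4(1+c)}{5}\big)^{p-1}<1$; such $c$ exists since the left side tends to $(4/5)^{p-1}<1$ as $c\to 0^+$. If $x_1\ge cT$, then as above $\big(\sum_j x_j\big)^p-\sum_j x_j^p\ge(x_1+T)^p-x_1^p-T^p\ge(p-1)x_1T^{p-1}\ge(p-1)c\,T^p$. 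If instead $x_1<cT$, put $x_{j_0}=\max_{j\ge 2}x_j$; the upgraded non-singularity bound gives $x_{j_0}\le 4\big(x_1+T-x_{j_0}\big)$, hence $x_{j_0}\le\tfrac45(x_1+T)<\tfrac{4(1+c)}{5}T$, so the tail is not concentrated and
$$\sum_{j\ge 2}x_j^p\ \le\ x_{j_0}^{p-1}\sum_{j\ge 2}x_j\ \le\ \Big(\tfrac{4(1+c)}{5}\Big)^{p-1}T^p.$$
Together with $(x_1+T)^p\ge T^p$ and $x_1^p<c^pT^p$ this yields $\big(\sum_j x_j\big)^p-\sum_j x_j^p\ge\big(1-c^p-(\tfrac{4(1+c)}{5})^{p-1}\big)T^p$, a positive multiple of $T^p$.

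The main obstacle is exactly the regime $x_1\ll T$ with the tail $\{x_j\}_{j\ge2}$ nearly concentrated in one index: naively the defect is then tiny while the right-hand side retains the full $T^p$. The resolution is that for the dominant tail index $j_0$ one has $p_{j_0}\le\frac12$, so \eqref{cond3} forces $x_{j_0}$ to be comparably smaller than $x_1+(T-x_{j_0})$; once $x_1$ is itself small this pushes $x_{j_0}$ strictly below $T$, making $\sum_{j\ge 2}x_j^p$ a strict fraction of $T^p$. All implied constants depend only on $p$ (through $p-1$, through $c(p)$, and through $\kappa(p):=1-c(p)^p-(\tfrac{4(1+c(p))}{5})^{p-1}>0$), which is all that the ``$\gtrsim$'' in the statement requires.
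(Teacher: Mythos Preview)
Your proof is correct and follows essentially the same strategy as the paper's: both hinge on the observation that $p_j\le\frac12$ for $j\ne 1$ forces $x_j\le\frac{4}{5}\sum_k x_k$ (equivalently $x_j\le 4\sum_{k\ne j}x_k$), then split into a ``dominant $x_1$'' regime handled by the convexity bound $(a+b)^p-a^p-b^p\ge (p-1)\min(a,b)\max(a,b)^{p-1}$ together with $(1-p_1)x_1\le 2T$, and a ``spread-out'' regime handled by $\sum x_j^p\le(\max x_j)^{p-1}\sum x_j$. The only difference is cosmetic: the paper normalises $\sum x_j=1$ and uses a single threshold $\max x_j\gtrless\frac45$, whereas you split first on $x_1\gtrless T$ and then introduce the auxiliary parameter $c(p)$ to bridge the intermediate sub-range $cT\le x_1<T$; this extra subcase is unnecessary but harmless.
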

    \begin{proof}
        \par{Since the inequality is homogenous, we may assume $\sum \limits_{j=1}^s x_j = 1$. Note that}
        $$1 - \sum \limits_{j=1}^s x_j^p \geq 1 - (\sum \limits_{j=1}^s x_j) \max \limits_{j=1}^s x_j^{p-1} = 1 - \max \limits_{j=1}^s x_j^{p-1}.$$
        \par{If neither of the numbers $x_i$ is greater than $\frac{4}{5}$, then the final difference is at least $1 - (\frac{4}{5})^{p-1} \gtrsim 1 \geq (1-p_1)x_1^p + (\sum \limits_{j > 1} x_j)^p$. If there is a number $x_j > \frac{4}{5}$, then it must be $x_1$ since from the non-singularity condition $x_j \leq \frac{2}{3-p_j}$ and all probabilities $p_j$ except $p_1$ are at most $\frac{1}{2}$. If $x_1 > \frac{4}{5}$, denote $\sum \limits_{j>1}x_j$ by $x$. Then}
    \begin{multline*}(\sum \limits_{j=1}^s x_j)^p - \sum \limits_{j=1}^s x_j^p \geq (x_1+x)^p - x_1^p - x^p = \int \limits_{x_1}^{x_1+x}pt^{p-1}dt - x^p \geq pxx_1^{p-1} - x^p \geq \\ \geq (p-1)xx_1^{p-1} \gtrsim 3xx_1^{p-1} \geq x^p + (1-p_1)x_1^p\end{multline*}
    \par{(the final step uses the non-singularity condition $(1-p_1)x_1 \leq 2x$ and the inequality $x_1>x$).}
    \end{proof}    
    \par{Our next goal is to reduce \eqref{numineq} to the non-singular case when $y_k(1-p_k) \leq 2 \sum \limits_{j \neq k} y_j$  for each $k$. Consider the difference between the right hand and left hand parts of \eqref{numineq} as a function of $y_k$. Its derivative is equal to}
    $$C p\big[(\sum \limits_{j=1}^s y_j)^{p-1}-y_k^{p-1}\big] - \mu(p-1)C^\frac{p-1}{p}A_ky_k^{p-2}.$$
    \par{Denote $\frac{\sum \limits_{j \neq k} y_j}{y_k}$ by $\theta$. Then the derivative is equal to}
    $$C p y_k^{p-1}\big[(1+\theta)^{p-1} - 1 - \frac{\mu(p-1)}{pC^\frac{1}{p}} \frac{A_k}{y_k}\big].$$
    \par{The bound \eqref{cond4} along with the non-singularity condition implies}
    $$A_k \leq 3 \sum \limits_{j \neq k} x_j \leq 3 \sum \limits_{j \neq k} y_j,$$
    {so the derivative is at least}
    $$C p y_k^{p-1}((1+\theta)^{p-1} - 1 - \frac{3\mu(p-1)}{pC^\frac{1}{p}}\theta).$$
    \par{When $\theta < 2$ and the constant $C$ is so large that $(p-1)\max(1, 3^{p-2}) \geq \frac{3\mu(p-1)}{pC^\frac{1}{p}}$, we can see that}
    $$(1+\theta)^{p-1} - 1 - \frac{3(p-1)}{C^\frac{1}{p}}\theta = \int \limits_{0}^\theta (p-1)(1+t)^{p-2} - \frac{3\mu(p-1)}{pC^\frac{1}{p}} dt \geq 0,$$
    {so the derivative is non-negative. That means we may reduce the number $y_k$ until either $y_k = x_k$ or $\theta = 2$, and the resulting inequality will imply the initial one. In the former case $y_k(1 - p_k) = x_k(1 - p_k) \leq 2 \sum \limits_{j \neq k} x_j \leq 2 \sum \limits_{j \neq k} y_j$, and in the latter $y_k(1-p_k) \leq y_k = 2 \sum \limits_{j \neq k} y_j$. Note that if the number $y_k$ was reduced this way, then $y_k \geq y_j$ for all $j$ both before and after the reduction. Since $y_k(1-p_k) \leq 2 \sum \limits_{j \neq k} y_j$ by construction and $y_{k'}(1-p_{k'}) \leq 2 \sum \limits_{j \neq {k'}} y_j$ follows from $y_{k'} \leq y_k$ for all other $k'$, the non-singularity condition holds for the new numbers $y_j$.}
    \par{Now we may increase each number $x_j$ to its respective $y_j$. The inequality \eqref{numineq} is not affected by this, and the upper bound \eqref{cond4} only increases. All that remains is to prove the inequality when $x_j = y_j$.}
    \par{The inequality \eqref{numineq} is equivalent to two separate inequalities.}
    \begin{Le}
        The following inequalities hold:
            \begin{equation} \label{ineq1} \sum \limits_{j=1}^s A_j^p \lesssim (\sum \limits_{j=1}^s y_j)^p - \sum \limits_{j=1}^s y_j^p; \end{equation}
            \begin{equation} \label{ineq2} \sum \limits_{j = 1}^s A_jy_j^{p-1} \lesssim (\sum \limits_{j=1}^s y_j)^p - \sum \limits_{j=1}^s y_j^p. \end{equation}
    \end{Le}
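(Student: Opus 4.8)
The plan is to establish \eqref{ineq1} and \eqref{ineq2} directly in the reduced situation $x_j = y_j$, where the non-singularity condition holds for the sequence $\{y_j\}$ and \eqref{cond4} reads $A_j \leq y_j(1-p_j) + p_j^{1/p}\big(\sum_{k\neq j}y_k^p\big)^{1/p}(1-p_j)^{1/p'}$. Once both inequalities are known, \eqref{numineq} follows: writing $\Delta = \big(\sum_j y_j\big)^p - \sum_j y_j^p$ and letting $c_1(p),c_2(p)$ denote the implied constants in \eqref{ineq1} and \eqref{ineq2}, the left-hand side of \eqref{numineq} is at most $\mu c_1\Delta + \mu c_2 C^{(p-1)/p}\Delta$, which is $\leq C\Delta$ as soon as $C$ is large enough, because $C^{(p-1)/p}=o(C)$. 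This is the only direction of the claimed equivalence that we actually use.

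To prove the two inequalities I would first relabel the indices so that $p_1=\max_j p_j$ and apply the non-singularity lemma (Lemma \ref{nonsinglemma}), which yields $\Delta \gtrsim (1-p_1)y_1^p + \big(\sum_{j>1}y_j\big)^p \geq (1-p_1)y_1^p + \sum_{j>1}y_j^p$. The workhorse estimate is then
$$\sum_{j=1}^s y_j^p(1-p_j) = y_1^p(1-p_1) + \sum_{j>1} y_j^p(1-p_j) \leq y_1^p(1-p_1) + \sum_{j>1}y_j^p \lesssim \Delta,$$
and I will also use the double-counting identity $\sum_j p_j \sum_{k\neq j}y_k^p = \sum_k y_k^p(1-p_k)$, valid because $\sum_j p_j = 1$.

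Now substitute the bound on $A_j$. For \eqref{ineq1}, combining $(a+b)^p \leq 2^{p-1}(a^p+b^p)$ with $(1-p_j)^p \leq 1-p_j$ gives $A_j^p \lesssim y_j^p(1-p_j) + p_j(1-p_j)\sum_{k\neq j}y_k^p$; summing over $j$ and invoking the double-counting identity, everything collapses to $\sum_j y_j^p(1-p_j) \lesssim \Delta$. For \eqref{ineq2}, the term coming from $y_j(1-p_j)$ again contributes $\sum_j y_j^p(1-p_j)$, while the term coming from $p_j^{1/p}\big(\sum_{k\neq j}y_k^p\big)^{1/p}(1-p_j)^{1/p'}$, multiplied by $y_j^{p-1}$ and summed in $j$, is handled by \holders inequality with exponents $p$ and $p'$: since $(p-1)p'=p$, it is at most $\big(\sum_j p_j\sum_{k\neq j}y_k^p\big)^{1/p}\big(\sum_j (1-p_j)y_j^p\big)^{1/p'} \lesssim \Delta^{1/p}\Delta^{1/p'} = \Delta$.

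The main obstacle is organisational rather than computational: one must apply Lemma \ref{nonsinglemma} with the index carrying the largest probability so that its output dominates $\sum_j y_j^p(1-p_j)$, and then recognise that both \eqref{ineq1} and \eqref{ineq2} reduce, via the double-counting identity and a single use of \holders inequality, to precisely this workhorse estimate. I would also check the edge cases $s=\infty$ (all the series converge, since $\sum_j y_j = \|F^*\|_1 < \infty$ and $\sum_j A_j^p = \|A\|_p^p < \infty$) and $p\downarrow 1$ (the implied constants blow up there, in agreement with Lemma \ref{nonsinglemma} and with the $p$-dependence of $C$ in Theorem \ref{numerical}).
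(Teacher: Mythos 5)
Your proposal is correct and follows essentially the same route as the paper: bound both inequalities by $\sum_j y_j^p(1-p_j)$ via \eqref{cond4}, the elementary inequality $(a+b)^p\lesssim a^p+b^p$, the double-counting identity $\sum_j p_j\sum_{k\neq j}y_k^p=\sum_k(1-p_k)y_k^p$, and a single Hölder application for \eqref{ineq2}, then invoke Lemma \ref{nonsinglemma} (with the index of largest probability) to dominate by $\big(\sum y_j\big)^p-\sum y_j^p$. The only difference is presentational: you name the workhorse estimate and the double-counting identity explicitly where the paper folds them into the chain of inequalities.
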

    \begin{proof} \par{If \eqref{ineq1} holds with constant $C_1$ and \eqref{ineq2} holds with constant $C_2$, then \eqref{numineq} holds for constants $C$ such that $\mu C_1 + \mu C^\frac{p-1}{p}C_2 \leq C$. The proof for each of the inequalities is rather straightforward.}
    \par{We start with \eqref{ineq1}. Note that $(\frac{x+y}{2})^p \leq \frac{x^p+y^p}{2}$ from Jensen's inequality for the function $|x|^p$, so, $(x+y)^p \lesssim x^p + y^p$. Then, \eqref{cond4} implies}
    $$\sum \limits_{j=1}^s A_j^p \lesssim \sum \limits_{j=1}^s y_j^p(1-y_j)^p + \sum \limits_{k \neq j} p_jy_k^p = \sum \limits_{j=1}^s y_j^p((1-p_j)^p + (1-p_j)) \lesssim \sum \limits_{j=1}^s y_j^p(1-p_j) \lesssim (\sum \limits_{j=1}^s y_j)^p - \sum \limits_{j=1}^s y_j^p$$
    \par{(the final step uses Lemma \ref{nonsinglemma}). Moving to \eqref{ineq2}, we immediately see that}
    $$\sum \limits_{j = 1}^s A_jy_j^{p-1} \leq \sum \limits_{j = 1}^s (y_j^p(1-p_j) + (y_j^p(1-p_j))^\frac{1}{p'} (\sum \limits_{k \neq j} p_jy_k^p)^\frac{1}{p}).$$
    \par{Applying \holders inequality to the second summand yields}
    \begin{multline*} \sum \limits_{j = 1}^s (y_j^p(1-p_j))^\frac{1}{p'}(\sum \limits_{k \neq j} p_jy_k^p)^\frac{1}{p} \leq (\sum \limits_{j = 1}^s (y_j^p(1-p_j)))^\frac{1}{p'} (\sum \limits_{j = 1}^s \sum \limits_{k \neq j} p_jy_k^p)^\frac{1}{p} = \\ = (\sum \limits_{j = 1}^s (y_j^p(1-p_j)))^\frac{1}{p'}(\sum \limits_{j = 1}^s(y_j^p(1-p_j)))^\frac{1}{p} = \sum \limits_{j = 1}^s(y_j^p(1-p_j)). \end{multline*}
    \par{Overall, the sum is at most $2\sum \limits_{j = 1}^s(y_j^p(1-p_j)) \lesssim (\sum \limits_{j=1}^s y_j)^p - \sum \limits_{j=1}^s y_j^p$ by Lemma \ref{nonsinglemma}. This concludes the proof.}\end{proof}

\section{Obtaining the continuity of the conjugate Riesz potential}
    \par{Recall that the functions $A$ and $B$ were chosen in such a way that $I_\alpha'[F] = \pm A \pm B$, and, by definition of the values $y_j$, $\|F^*\|_1 = \sum \limits_{j=1}^s y_j$. Thus, \eqref{conjHLS} can be written as}
    \begin{equation} \int \limits_{\Omega} |\pm A \pm B|^p d\P \leq C^p(\sum \limits_{j=1}^s y_j)^p \end{equation}
    {for some constant $C$, and we will prove its stronger form}
    \begin{equation} \label{indHLS} \int \limits_{\Omega} (A+B)^p d\P \leq C(\sum \limits_{j=1}^s y_j)^p \end{equation}
for some other constant, which we also call $C$.
    \par{Our first step in proving this inequality is to obtain an upper bound for the $L_p$-norm of the function $B$ using induction hypothesis. Note that}
    $$ ((E_{n+1}-E_n)M_{n+1}F_{n+1})|_{a_j} = (E_n - E_{n-1})(p_j^{\frac{1}{p'}}M_n)G_n^j.$$
    \par{Here we slightly abuse the notation: on the left hand side, the operators $E_n$ and $M_{n+1}$ are operators of averaging and multiplication on $\X$, whereas the operators on the right hand side act on $\X_j$. Using the induction hypothesis, we see}
    $$B |_{a_j} = p_j^\frac{1}{p'} I'_\alpha[G^j],$$
    \begin{equation} \label{bbound} \int \limits_{a_j} |B|^p d\P = p_j  \int \limits_{a_j} |B|^p d\P_j = p_j^{1 + \frac{p}{p'}} \int \limits_{a_j} |I'_\alpha[G^j]|^p d\P_j \leq Cp_j^p \|F^*|_{a_j}\|^p_1 = Cy_j^p. \end{equation}
    \par{To conclude the proof, we use the properties of the power functions to split the power $(A+B)^p$ into three parts. Our proof will be slightly different depending on whether $1 < p \leq 2$ or $p > 2$. If $1 < p \leq 2$, then $(x+y)^{p-1} \leq x^{p-1}+y^{p-1}$ for any positive numbers $x$ and $y$. If $p > 2$, this inequality does not hold, but the function $|\cdot|^{p-1}$ is convex. Thus, using Jensen's inequality, we note that $(\frac{x+y}{2})^{p-1} \leq \frac{x^{p-1}+y^{p-1}}{2}$, or $(x+y)^{p-1} \leq 2^{p-2} (x^{p-1}+y^{p-1})$. The two cases can be combined into}
    $$(x+y)^p \leq \max(1, 2^{p-2})(x^{p-1}+y^{p-1}).$$
    \par{Using this inequality and the convexity of the function $|\cdot|^p$ (which is true in both cases), we get the bound
    $$(A+B)^p = B^p + \int \limits_{B}^{A+B} pt^{p-1}dt \leq B^p + Ap(A+B)^{p-1} \leq B^p + \mu(A^p + AB^{p-1}),$$
    \par{where $\mu = p\max(1, 2^{p-2})$. For each atom $a_j$ this means}
    $$\int \limits_{a_j} (A+B)^p d\P \leq \int \limits_{a_j} B^pd\P + \mu A_j^p + \mu A|_{a_j} \int \limits_{a_j} B^{p-1}d\P.$$ 
    \par{Applying \holders inequality to the last integral, we get}
    $$ \int \limits_{a_j} (A+B)^p d\P \leq \int \limits_{a_j} B^pd\P + \mu A_j^p + \mu A|_{a_j} \|\I_{a_j}\|_p \|B^{p-1}\I_{a_j}\|_{p'} = \int \limits_{a_j} B^pd\P + \mu A_j^p + \mu A_j(\int \limits_{a_j} B^pd\P)^\frac{p-1}{p}.$$
    \par{Finally, substitution of $\int \limits_{a_j} B^pd\P$ with the bound from \eqref{bbound} yields}
    \begin{equation} \label{threesummands} \int \limits_{a_j} (A+B)^p d\P \leq Cy_j^p + \mu A_j^p + C^\frac{p-1}{p} \mu A_jy_j^{p-1}. 
\end{equation}
    \par{Substituting the left-hand part in \eqref{indHLS} with a sum of these estimates and moving $\sum \limits_{j=1}^s y_j^p$ to the other side, we come to the inequality}
    $$\sum \limits_{j=1}^s \mu A_j^p + \sum \limits_{j=1}^s \mu C^\frac{p-1}{p}A_jy_j^{p-1} \leq C((\sum \limits_{j=1}^s y_j)^p - \sum \limits_{j=1}^s y_j^p). $$
   \par{Since the four sequences $p_j, x_j, y_j, A_j$ satisfy the four conditions of Theorem \ref{numerical}, this inequality holds true, from which the continuity of the conjugate Riesz potential follows, completing the proof of Theorem \ref{main}.}
	
\section{ \label{counterexample} Lack of continuity to $L_\infty$}
	\par{Since the operator $I_\alpha$ is a continuous mapping from $L_p$ to $L_q$ when $\alpha = \frac{1}{p}-\frac{1}{q}$ and $1 < p < q < \infty$, it is somewhat natural to expect that it is also a continuous mapping from $L_{p'}$ to $L_\infty$ when $\alpha = \frac{1}{p'}$. However, we demonstrate that this is not true for any filtration. To prove that, we fix the filtration and show that for such filtration the partial sums of the series \eqref{conjRiesz} are not uniformly bounded as operators from $L_1$ to $L_{p}$. Thus, their formal conjugates do not converge in norm to a bounded operator from $L_{p'}$ to $L_\infty$.}
	\par{Consider a filtration $\{\F_n\}_n$ on the space $\X = (\Omega, \F, \P)$ and construct a sequence of atoms $a_n \in \F_n$ with the following properties:}
    \begin{enumerate}
        \item{$a_0 = \Omega$;}
        \item{for each $n$ $a_{n+1} \subset a_n$ and $\P(a_n) > 0$;}
        \item{Either $\P(a_{n+1}) = \P(a_n)$ or $\P(a_{n+1}) \leq \frac{1}{2} \P(a_n)$.}
    \end{enumerate}
    \par{Given a fixed atom $a_n$, we select the atom $a_{n+1}$ the following way. If $a_n$ is partitioned into smaller sub-atoms during step $n+1$, then we choose $a_{n+1}$ as the second greatest in probability among those sub-atoms. If $a_n$ is not partitioned, then we have to choose $a_{n+1} = a_n$. Since the probability space $\X$ has no atoms and $\{\F_n\}$ is a filtration, we have $\P(a_n) \to 0$ as $n \to \infty$. Denote $d_n = \P(a_n)$.}
    \par{Consider the martingale $F_n = d_n^{-1} \I_{a_n}$. Clearly, the martingale has the $L_1$-norm of 1. We also have $E_n M_n F_n = M_n F_n = d_n^{-\frac{1}{p}}\I_{a_n}$ and $E_{n-1}M_nF_n = \frac{d_n^\frac{1}{p'}}{d_{n-1}}\I_{a_{n-1}}$. Now we calculate the conjugate Riesz potential of the martingale $F$:}
    \begin{multline*} I'_\alpha[F] = \sum \limits_{n=1}^\infty (E_n-E_{n-1})M_n F_n = -F_0 + \sum \limits_{n=0}^\infty E_n(M_n - M_{n+1})F_n = \\ = -F_0 + \sum \limits_{n=0}^\infty E_n(d_n^{\frac{1}{p'}} - d_{n+1}^{\frac{1}{p'}})F_n =  \sum \limits_{n=0}^\infty (d_n^{-\frac{1}{p}} - \frac{d_{n+1}^{\frac{1}{p'}}}{d_n})\I_{a_n} - 1.\end{multline*}
    \par{Note that if $d_n = d_{n+1}$, then the corresponding summand $(d_n^{-\frac{1}{p}} - \frac{d_{n+1}^{\frac{1}{p'}}}{d_n}) \I_{a_n}$ is zero, and if $d_{n+1} \leq \frac{1}{2}d_n$, then}
    $$(d_n^{-\frac{1}{p}} - \frac{d_{n+1}^{\frac{1}{p'}}}{d_n}) \geq (1-2^{-\frac{1}{p'}}) d_n^{-\frac{1}{p}}.$$ 
    \par{Remove duplicates from the sequence $\{a_n\}$ to get an infinite subsequence $\{a_{n_k}\}$. Using the new notation, we get a pointwise estimate}
    $$I'_\alpha[F] \geq (1 - 2 ^{-\frac{1}{p'}}) \sum \limits_{k=0}^\infty d_{n_k}^{-\frac{1}{p}} \I_{a_{n_k}} - 1,$$
    $$I'_\alpha[F]|_{a_{n_k} \setminus a_{n_{k+1}} } \geq -1 + (1 - 2 ^{-\frac{1}{p'}}) \sum \limits_{l=0}^k d_{n_l}^{-\frac{1}{p}} \gtrsim \sum \limits_{l=0}^k d_{n_l}^{-\frac{1}{p}} \geq d_{n_k}^{-\frac{1}{p}}$$
for sufficiently large $k \geq K(\X, p)$ (i.e. such that $(1 - 2 ^{-\frac{1}{p'}})d_{n_k}^{-\frac{1}{p}} > 2$). Then 
   $$\|I'_\alpha[F]\|_p^p \gtrsim \sum \limits_{k = K(\X, p)}^\infty \frac{d_{n_k}-d_{n_{k+1}}}{d_{n_k}} \geq \sum \limits_{k = K(\X, p)}^\infty \frac{1}{2}.$$
    \par{Thus, the partial sums cannot be uniformly bounded operators from $L_1$ to $L_p$, and the formally conjugate operators cannot converge to a bounded operator from $L_{p'}$ to $L_\infty$.}

\newpage


\begin{thebibliography}{239}
    \bibitem{garcia} A. M. Garcia, \textit{Martingale Inequalities, Sem. Notes on Recent Progress}, Benjamin (1973).
    \bibitem{kazamaki} N. Kazamaki, \textit{Continuous Exponential Martingales and BMO}, Lecture Notes in Math. \textbf{1579} (1994).
     \bibitem{naksad} E. Nakai and G. Sadasue, \textit{Martingale Morrey--Campanato spaces and fractional integrals}, Journal of
Function Spaces and Applications (2012), Article ID 673929.
    \bibitem{sobolev} S. Soboleff, \textit{Sur un th\'eor\`eme d'analyse fonctionnelle}, Mat. Sbornik \textbf{4}(46) (1938), no.3, 471--497, (in Russian); translated in Amer. Math. Soc. Transl., 1963, \textbf{2}(34), 39--68.
    \bibitem{steinweiss} E. M. Stein and G. Weiss, \textit{On the theory of harmonic functions of several variables}, Acta Math. \textbf{103} (1961), 25--62.   
    \bibitem{us} D. Stolyarov and D. Yarcev, \textit{Fractional integration for irregular martingales}, Tohoku Math. J. \textbf{74} (2022), no.2, 253--261.    
    \bibitem{watari} C. Watari, \textit{Multipliers for Walsh--Fourier--Series}, Tohoku Math J. \textbf{16} (1964), no.3, 239--251.  
\end{thebibliography}
\end{document}